\newtheorem{theorem}{Theorem}[section]
\newtheorem{lemma}[theorem]{Lemma}
\theoremstyle{definition}
\theoremstyle{remark}
\numberwithin{equation}{section}
\begin{document}

\title[Mathieu series]{A solution to an open problem on the Mathieu series\\ posed by Hoorfar and Qi}
\author{Gerg\H{o} Nemes}
\address{Lor\'and E\"otv\"os University\\ H-1117 Budapest,
P\'azm\'any P\'eter s\'et\'any 1/C, Hungary}
\email{nemesgery@gmail.com}
\keywords{Mathieu series, bounds, asymptotic expansion}
\subjclass[2000]{Primary: 26D15, Secondary: 41A60}
\dedicatory{}
\begin{abstract}
In this paper, the author gives a solution to an open problem about the famous Mathieu series, that is, he obtains a sharp double inequality for bounding this series.
\end{abstract}
\maketitle

\section{Introduction}\label{sec1}

We consider the series
\begin{equation}\label{eq0}
S\left( r \right) = \sum\limits_{n \ge 1} {\frac{{2n}}{{\left( {n^2  + r^2 } \right)^2 }}} ,\;\;r > 0
\end{equation}
called Mathieu's series in the literature. It was introduced by Mathieu in his 1890 work on elasticity of solid bodies. Several interesting problems, solutions and bounds for the Mathieu series can be found in the works we collected in the references. Alzer, Brenner and Ruehr \cite{alzer} showed that the best constants $\alpha$ and $\beta$ for which
\[
\frac{1}{{r^2  + \alpha }} < S\left( r \right) < \frac{1}{{r^2  + \beta }}
\]
holds, are $\alpha = \frac{1}{2\zeta \left(3\right)}$ and $\beta = \frac{1}{6}$, where $\zeta$ denotes Riemann's Zeta function. Our main purpose is to prove the following problem proposed by A. Hoorfar and F. Qi \cite{qi}:
find the best possible constants $a$ and $b$ such that
\begin{equation}\label{eq9}
\cfrac{1}{{r^2  + \cfrac{1}{2} - \cfrac{{4r^2  + 1}}{{12}}\left( {r^2  + a} \right)^{ - 1} }} < S\left( r \right) < \cfrac{1}{{r^2  + \cfrac{1}{2} - \cfrac{{4r^2  + 1}}{{12}}\left( {r^2  + b} \right)^{ - 1} }}
\end{equation}
holds true for all $r > 0$. They showed that $a \leq 3/2$ and $b \geq 1/4$. The following theorem gives the complete answer.

\begin{theorem}\label{th1} For any real number $r > 0$, the inequality \eqref{eq9} holds with the best possible constants $a = \cfrac{{\zeta \left( 3 \right)}}{{6\zeta \left( 3 \right) - 6}} = 0.9915168156\ldots$ and $b = \cfrac{{13}}{{30}}$.
\end{theorem}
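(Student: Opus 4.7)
The plan is to reformulate \eqref{eq9} as a bilateral bound on a single auxiliary function $c(r)$, identify $a$ and $b$ as its endpoint limits, and then verify the bounds directly. Since $S(r)>0$ and (by the Alzer--Brenner--Ruehr lower bound) $1/S(r) < r^{2}+1/(2\zeta(3)) < r^{2}+1/2$, the denominators in \eqref{eq9} are positive. Clearing them transforms \eqref{eq9} into the equivalent double inequality $b<c(r)<a$ for all $r>0$, where
\[
c(r):=\frac{4r^{2}+1}{12\bigl(r^{2}+\tfrac12-1/S(r)\bigr)}-r^{2}.
\]
In this formulation the best constants are precisely $a=\sup_{r>0}c(r)$ and $b=\inf_{r>0}c(r)$.

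Next I compute the two endpoint limits. At $r=0^{+}$, since $S(0)=2\zeta(3)$, one obtains
\[
c(0^{+})=\Bigl[12\bigl(\tfrac12-\tfrac1{2\zeta(3)}\bigr)\Bigr]^{-1}=\frac{\zeta(3)}{6\zeta(3)-6}=a.
\]
At $r=\infty$ I would derive the asymptotic expansion
\[
S(r)=\frac{1}{r^{2}}-\frac{1}{6r^{4}}-\frac{1}{30r^{6}}+O(r^{-8}),
\]
by repeated integration by parts in the classical representation $r S(r)=\int_{0}^{\infty}t\sin(rt)/(e^{t}-1)\,dt$, using $f^{(2k)}(0)=B_{2k}$ for $f(t)=t/(e^{t}-1)$. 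Inverting yields $1/S(r)=r^{2}+\tfrac16+\tfrac{11}{180\,r^{2}}+O(r^{-4})$, from which $c(r)\to 13/30=b$ follows at once.

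The final, substantive step is to prove that $c(r)$ lies strictly between $b$ and $a$ for every $r>0$, which is equivalent to the pair of strict inequalities $H_{a}(r)>0$ and $H_{b}(r)<0$ on $(0,\infty)$ with
\[
H_{c}(r):=S(r)\bigl[12r^{4}+(12c+2)r^{2}+(6c-1)\bigr]-12(r^{2}+c).
\]
By construction $H_{a}$ vanishes to second order at $r=0$ and $H_{b}$ vanishes in the asymptotic sense as $r\to\infty$, so the two inequalities are sharp at their respective endpoints. The natural tools to treat them are the Taylor expansion of $S(r)$ at $r=0$, valid on a finite disk, and the integration-by-parts asymptotic of $S$ at infinity.

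The principal obstacle is precisely this sign analysis. If one inserts $S(r)=\sum_{k\ge0}2(k+1)(-1)^{k}\zeta(2k+3)r^{2k}$ into $H_{b}$, the resulting power series has coefficients which alternate in sign from $k\ge 2$ onward and whose moduli grow linearly in $k$; no Leibniz-type bound is available, and the inequality cannot be proved purely termwise. My plan is to combine two estimates: on $(0,r_{0})$ use the Taylor series, grouped in consecutive pairs and controlled via the quantitative estimate $\zeta(2k+1)-1=O(2^{-2k})$, while on $(r_{0},\infty)$ use the integration-by-parts expansion of $S$ with a remainder whose sign can be pinned down via the monotonicity of the relevant derivatives of $t/(e^{t}-1)$. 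Choosing $r_{0}$ so that both halves of the estimate are simultaneously tight is the delicate quantitative heart of the argument.
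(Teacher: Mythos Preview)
Your reformulation is exactly the paper's: your $c(r)$ is the paper's auxiliary function $\alpha(r)$, and the endpoint limits $c(0^{+})=\zeta(3)/(6\zeta(3)-6)$ and $c(+\infty)=13/30$ are computed the same way. The gap is in the ``substantive step''. You propose a two-region split, Taylor series on $(0,r_{0})$ and integration-by-parts asymptotics on $(r_{0},\infty)$, and assert that a suitable $r_{0}$ can be chosen where both halves are simultaneously tight. But these regimes do not meet. The Taylor expansion of $S(r)$ about $0$ has radius of convergence exactly $1$ (the nearest singularity of $2n/(n^{2}+r^{2})^{2}$ is at $r=\pm i$), so it gives no control for $r\ge 1$; on the other side, Russell's remainder bound $|R_{k}(r)|<\tfrac{\pi}{2}\binom{k+1/2}{k}B_{2k}\,r^{-2k-2}$ only becomes tight enough to settle the required signs for $r$ moderately large---in the paper's execution, only for $r>2.57$. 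The paper therefore uses a \emph{three}-region decomposition, handling the middle range $0.3<r\le 2.57$ with a separate tool (Lampret's Euler--Maclaurin approximation), which produces explicit rational upper and lower bounds for $S(r)$ that transfer to rational bounds on $\alpha(r)$ and are then checked by polynomial sign arguments. Your single crossover point $r_{0}$ does not exist, and an additional device for the intermediate range is needed.

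Two smaller points. First, your claim that ``no Leibniz-type bound is available'' for the Taylor side is off: the coefficients grow only linearly in $k$, so for small $r$ (the paper takes $r\le 0.3$) the terms $|a_{k}|r^{2k}$ \emph{do} decrease monotonically, and Leibniz applies directly---this is exactly how the paper handles the left-hand region. Second, the sign of the asymptotic remainder $R_{k}(r)$ is not in general determined by monotonicity of the derivatives of $t/(e^{t}-1)$; Russell's theorem controls only $|R_{k}|$, and the paper works with that magnitude bound throughout. A further tactical difference: on the two outer regions the paper does not attack $H_{a}>0$ and $H_{b}<0$ separately, but instead proves $\alpha'(r)<0$ (via a single auxiliary expression $T(r)>0$), so that both bounds follow at once from the endpoint values.
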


\section{Lemmas}

To prove our theorem we define a function $\alpha$ for any real number $r > 0$ by the equality
\[
S\left( r \right) = \cfrac{1}{{r^2  + \cfrac{1}{2} - \cfrac{{4r^2  + 1}}{{12}}\left( {r^2  + \alpha \left( r \right)} \right)^{-1}}}.
\]
Theorem~\ref{th1} is the corollary of the following lemmas.

\begin{lemma}\label{le1}For any real number $r > 2.57$, $\alpha(r)$ is a strictly decreasing function of $r$, satisfies the inequalities $\frac{{13}}{{30}} < \alpha \left( r \right) < \frac{{\zeta \left( 3 \right)}}{{6\zeta \left( 3 \right) - 6}}$, and $\lim _{r \to +\infty } \alpha \left( r \right) = \frac{{13}}{{30}}$.
\end{lemma}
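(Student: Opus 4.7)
The plan is to solve the defining equation for $\alpha(r)$ explicitly, obtaining
\[
\alpha(r) = \frac{4r^2+1}{12\bigl(r^2+\tfrac{1}{2}-1/S(r)\bigr)} - r^2,
\]
and then read off all three claims from a sufficiently sharp asymptotic analysis of $S(r)$ at infinity. The key preliminary is a two-term expansion of $1/S(r)$ with an explicit error bound. Using Mathieu's integral representation
\[
rS(r) = \int_0^\infty \frac{t \sin(rt)}{e^t-1}\,dt,
\]
expanding $t/(e^t-1) = \sum_{k\ge 0} B_k t^k/k!$ and iterating integration by parts yields the Poincar\'e expansion $rS(r) \sim \sum_{k\ge 0} (-1)^k B_{2k}/r^{2k+1}$, with a controlled remainder at each truncation. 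Inverting through order $1/r^4$ gives $1/S(r) = r^2 + 1/6 + 11/(180\,r^2) + O(r^{-4})$, which, substituted into the formula for $\alpha$, produces
\[
\alpha(r) = \frac{13}{30} + \frac{104}{525\,r^2} + O\bigl(r^{-4}\bigr);
\]
the limit $\lim_{r\to\infty}\alpha(r) = 13/30$ is immediate.

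For monotonicity, I would differentiate the explicit formula for $\alpha$ to express $\alpha'(r)$ in terms of $S(r)$ and $S'(r)$, and apply the same integration-by-parts machinery to the $r$-derivative of Mathieu's integral to obtain a matching expansion of $S'$. The leading behaviour is $\alpha'(r) \sim -208/(525\,r^3)$, and the task is to make the tail estimates effective down to $r = 2.57$. Once $\alpha'(r) < 0$ is established on $(2.57,\infty)$, the inequality $\alpha(r) > 13/30$ follows from the limit, and the upper bound reduces to the finite numerical check $\alpha(2.57) < \zeta(3)/(6\zeta(3)-6) = 0.9915\ldots$, verifiable by direct evaluation of $S(2.57)$ from the defining series.

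The main obstacle is the effective remainder control in the monotonicity step. Leading-order asymptotics alone only show that $\alpha$ is eventually decreasing; pinning the threshold down to the specific value $r = 2.57$ requires truncating the expansions of $S$ and $S'$ after enough terms and bounding the resulting tail integrals tightly enough that the leading negative contribution to $\alpha'$ dominates uniformly on $[2.57,\infty)$. The precise numerical value of the threshold is best understood as arising from balancing the truncation order against the remainder bounds, so a different truncation would produce a slightly different (but equally admissible) cutoff.
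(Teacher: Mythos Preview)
Your plan is correct and follows essentially the same route as the paper: express $\alpha$ rationally in $S$, use the asymptotic expansion of $S$ with explicit remainder (the paper cites this as Russell's theorem, which packages exactly the integration-by-parts on the integral representation that you describe), read off the limit $13/30$, and reduce monotonicity to a sign condition on an expression in $S$ and $S'$ with controlled tails. The paper carries out precisely the effective-remainder step you flag as the obstacle, via heavy explicit polynomial arithmetic that terminates in a degree-$8$ polynomial shown positive by the substitution $r\mapsto r+2.57$; the only structural difference is that the paper establishes the upper bound $\alpha(r)<\zeta(3)/(6\zeta(3)-6)$ directly from a lower bound on $S$ (valid already for $r>1.3$) rather than, as you propose, from monotonicity together with the numerical value $\alpha(2.57)\approx 0.47$---both arguments work.
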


\begin{lemma}\label{le2}For any real number $0< r \leq 0.3$, $\alpha(r)$ is a strictly decreasing function of $r$ and satisfies the inequalities $\frac{{13}}{{30}} < \alpha \left( r \right) < \frac{{\zeta \left( 3 \right)}}{{6\zeta \left( 3 \right) - 6}}$.
\end{lemma}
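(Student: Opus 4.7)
The plan is to reduce the problem to the interval $x := r^2 \in (0, 0.09]$ and exploit the rapidly convergent Taylor expansion of $T(x) := S(\sqrt{x})$ at the origin. On $(-1, 1)$ one has the alternating series
\[
T(x) = \sum_{k \ge 0}(-1)^k 2(k+1)\zeta(2k+3)\,x^k = 2\zeta(3) - 4\zeta(5)\,x + 6\zeta(7)\,x^2 - \cdots,
\]
whose terms decrease so fast on $[0, 0.09]$ that any partial sum brackets $T(x)$ with an explicit error bound. Solving the defining equation for $\alpha$ yields $\alpha(r) = A(r^2)$, where
\[
A(x) := \frac{4x+1}{12 \bigl( x + \tfrac12 - 1/T(x) \bigr)} - x,
\]
and direct substitution gives $\lim_{x \to 0^+} A(x) = \zeta(3)/(6\zeta(3)-6)$.

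The heart of the argument will be to prove that $A'(x) < 0$ for every $x \in (0, 0.09]$, which yields the strict monotonicity of $\alpha$ in $r$. Setting $H(x) = x + \tfrac12 - 1/T(x)$, so that $H'(x) = 1 + T'(x)/T(x)^2$, a short calculation recasts the condition $A'(x) < 0$ as
\[
(4x+1) H'(x) + 12 H(x)^2 - 4 H(x) > 0,
\]
which is easily checked at $x = 0$. To propagate the inequality to the entire interval, I would insert rigorous two-sided enclosures for $T(x)$ and for $T'(x) = -\sum_{k \ge 1} (-1)^{k-1} 2k(k+1) \zeta(2k+3)\, x^{k-1}$ obtained from low-order partial sums (the alternating-series remainder controls the error), and then show by direct polynomial estimates that the left-hand side stays strictly positive throughout $[0, 0.09]$.

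With monotonicity in hand, the upper bound $\alpha(r) < \zeta(3)/(6\zeta(3)-6)$ follows at once from the limit at $0$. For the lower bound, monotonicity reduces the lemma to the single numerical inequality $A(0.09) > 13/30$, which I would verify by plugging a long enough partial sum of $T(0.09)$ into the explicit formula for $A$, again using the alternating-series remainder to bound the truncation error. The hard part will be the uniform sign of $A'$: the expression is a moderately intricate rational function of $T(x)$, $T'(x)$ and $x$, and the positive margin at the endpoints is numerically small, so the error accounting has to be done carefully. The smallness of $x$ and the geometric-like decay of the Taylor coefficients should nevertheless make the required estimates comfortable.
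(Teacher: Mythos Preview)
Your proposal is correct and follows essentially the same route as the paper: after the change of variable $x=r^2$, your positivity condition $(4x+1)H'(x)+12H(x)^2-4H(x)>0$ is, upon multiplying by $T(x)^2/4$, precisely the paper's quantity $T(r)$ from (3.6), and both arguments establish its sign on the small interval by inserting truncated alternating Taylor expansions (with Leibniz remainder bounds) for $S$ and its derivative, reducing to an explicit polynomial inequality; the bounds on $\alpha$ then follow from monotonicity together with the limit at $0$ and the value at the right endpoint.
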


\begin{lemma}\label{le3}For any real number $0.3 < r \leq 2.57$, $\alpha(r)$ satisfies the inequalities $\frac{{13}}{{30}} < \alpha \left( r \right) < \frac{{\zeta \left( 3 \right)}}{{6\zeta \left( 3 \right) - 6}}$.
\end{lemma}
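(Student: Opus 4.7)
The plan is, unlike the monotonicity-driven arguments for Lemmas~\ref{le1} and~\ref{le2}, to reduce Lemma~\ref{le3} on the compact interval $(0.3,2.57]$ to a finite rigorous numerical check.

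First I would invert the defining relation to obtain
\[
\alpha(r) = \frac{4r^2+1}{12\bigl(r^2+\frac{1}{2}-\frac{1}{S(r)}\bigr)} - r^2,
\]
and, writing $A:=\frac{\zeta(3)}{6\zeta(3)-6}$ and $B:=\frac{13}{30}$, observe that a direct algebraic manipulation (all denominators involved being positive on our range) turns the desired two-sided bound on $\alpha(r)$ into
\[
\varphi_A(r)\;<\;S(r)\;<\;\varphi_B(r),\qquad \varphi_C(r):=\frac{1}{r^2+\frac{1}{2}-\frac{4r^2+1}{12(r^2+C)}}.
\]

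Next, to bound $S(r)$ rigorously I would fix an integer $N\ge 2$, for which $x\mapsto 2x/(x^2+r^2)^2$ is decreasing on $[N,\infty)$ for every $r\in(0.3,2.57]$. The integral test then gives
\[
s_N(r) + \frac{1}{(N+1)^2+r^2} \;\le\; S(r) \;\le\; s_N(r) + \frac{1}{N^2+r^2},\qquad s_N(r):=\sum_{n=1}^{N}\frac{2n}{(n^2+r^2)^2},
\]
with a tail of order $1/N^3$. A short calculus check shows that $S$, $\varphi_A$, and $\varphi_B$ are all strictly decreasing on $(0,\infty)$: for $S$ term-by-term (each summand has $r$-derivative $-8nr/(n^2+r^2)^3$), and for $\varphi_C$ through the denominator identity $D_C'(r)=r\bigl[2-\frac{4C-1}{6(r^2+C)^2}\bigr]$, which is positive on our range whenever $C\ge B$.

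Finally, I would partition $(0.3,2.57]$ into finitely many subintervals $[r_i,r_{i+1}]$ and, on each, verify using the three monotonicities above the two strict inequalities
\[
\varphi_A(r_i)<s_N(r_{i+1})+\frac{1}{(N+1)^2+r_{i+1}^2},\qquad s_N(r_i)+\frac{1}{N^2+r_i^2}<\varphi_B(r_{i+1}),
\]
each a finite numerical check in rationals (with $\zeta(3)$ replaced by tight rational bounds). The main obstacle will be balancing $N$ against the mesh: the gap $\varphi_B(r)-\varphi_A(r)$ shrinks as $r$ grows, so finer meshing is needed near $r=2.57$, and rounding in the finite computation must stay well below this gap for the strict inequalities to survive.
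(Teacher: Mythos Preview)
Your proposal is sound: the inversion, the equivalence $\tfrac{13}{30}<\alpha(r)<A \iff \varphi_A(r)<S(r)<\varphi_B(r)$ (positivity of the relevant denominators being guaranteed by the known bound $S(r)>1/(r^2+\tfrac12)$), the integral-test enclosure of $S$, and the monotonicity of $S,\varphi_A,\varphi_B$ all check out, and the partition argument is valid. Do quantify before you commit: near $r=2.57$ one has $\alpha(r)\approx 0.471$, so the margin $\varphi_B(r)-S(r)$ is only about $4\times 10^{-5}$; with $N\approx 100$ (tail gap $\sim 2/N^3\approx 2\times 10^{-6}$) and slopes of order $10^{-1}$ you will need on the order of several thousand subintervals---finite and rigorous, but not tiny.

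The paper takes a different route that avoids the fine mesh. It uses Lampret's Euler--Maclaurin approximation with $m=4,5$ to produce explicit rational enclosures $Z_1(r)<S(r)<Z_2(r)$, feeds these through \eqref{eq1} to obtain rational upper and lower bounds for $\alpha(r)$ itself, and then proves those bounding rational functions are \emph{monotone decreasing} on the whole interval by a polynomial-positivity argument (after the substitution $r\mapsto 1/(r+1/2.57)$ the numerator of the derivative has all coefficients of one sign). That collapses the verification to two endpoint evaluations. Your approach trades this algebraic monotonicity step for the more elementary monotonicity of $S$ and $\varphi_C$ plus brute-force subdivision: conceptually simpler and relying only on the integral test rather than Euler--Maclaurin with remainder, but computationally heavier.
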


In this paper we use the following definition of the Bernoulli numbers:
\[
\frac{x}{e^{x}-1} = 1 - \frac{1}{2}x + B_2 \frac{{x^2 }}{{2!}} - B_4 \frac{{x^4 }}{{4!}} + B_6 \frac{{x^6 }}{{6!}} -  \cdots ,\;\;\left| x \right| < 2\pi ,\;\; x \neq 0,
\]
thus the first few are
\[
B_2  = \frac{1}{6},\;\;B_4  = \frac{1}{{30}},\;\;B_6  = \frac{1}{{42}},\;\;B_8  = \frac{1}{{30}},\;\;B_{10}  = \frac{5}{{66}},\;\; \ldots \,.
\]
To prove our lemmas we shall use the following theorem of Russell \cite{russel}.
\begin{lemma}\label{th2} Let $S\left(r\right)$ be defined by ~(\ref{eq0}) and $f\left( x \right)= x/\left( {e^x  - 1} \right)$ if $x \neq 0$, $f\left(0\right) = 1$. Then
\[
S\left( r \right) = \frac{1}{{r^2 }} - \sum\limits_{i = 1}^k {\frac{{B_{2i} }}{{r^{2i + 2} }}}  + R_k \left( r \right) \quad \left( {r > 0,\; k = 1,2, \ldots } \right),
\]
where $R_k \left( r \right)$ may be expressed in either of the forms
\begin{equation*}
\begin{split}
R_k \left( r \right) & = \frac{{\left( { - 1} \right)^k }}{{r^{2k + 2} }}\int_0^\infty  {f^{\left( {2k + 1} \right)} \left( x \right)\cos \left( {rx} \right)dx}\\ 
& = \frac{{\left( { - 1} \right)^{k + 1} }}{{r^{2k + 3} }}\int_0^\infty  {f^{\left( {2k + 2} \right)} \left( x \right)\sin \left( {rx} \right)dx}
\end{split}
\end{equation*}
and
\[
\left| {R_k \left( r \right)} \right| < \frac{1}{{r^{2k + 2} }}\frac{\pi }{2}\binom{k + 1/2}{k}B_{2k} .
\]
\end{lemma}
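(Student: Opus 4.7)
My plan is to derive an integral representation for $S(r)$, extract the asymptotic expansion by repeated integration by parts, and then estimate the remainder integral sharply.

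The first step is to establish
\[
S(r) = \frac{1}{r} \int_0^\infty f(x) \sin(rx)\, dx.
\]
This follows from the Laplace-transform evaluation $\int_0^\infty x e^{-nx}\sin(rx)\, dx = 2nr/(n^2+r^2)^2$, summing the geometric series $\sum_{n\geq 1} e^{-nx} = 1/(e^x-1)$, and interchanging sum and integral (justified by absolute convergence, since $x f(x)$ is integrable on $(0,\infty)$).

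Next, I set $J_j = \int_0^\infty f^{(2j)}(x)\sin(rx)\, dx$ and perform two successive integrations by parts. Because every derivative of $f$ decays exponentially at $+\infty$ and $\sin(0)=0$, the boundary terms collapse and one obtains the clean recurrence
\[
J_j = \frac{f^{(2j)}(0)}{r} - \frac{1}{r^2}\, J_{j+1}.
\]
Reading off $f^{(0)}(0)=1$ and $f^{(2i)}(0) = (-1)^{i+1} B_{2i}$ for $i \geq 1$ from the Bernoulli generating series supplied in the paper, and iterating down to $J_{k+1}$, yields
\[
S(r) = \frac{J_0}{r} = \frac{1}{r^2} - \sum_{i=1}^{k} \frac{B_{2i}}{r^{2i+2}} + \frac{(-1)^{k+1}}{r^{2k+3}}\, J_{k+1},
\]
which is precisely the second form of $R_k(r)$. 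Reversing one integration by parts inside $J_{k+1}$ gives $J_{k+1} = -r \int_0^\infty f^{(2k+1)}(x) \cos(rx)\, dx$, recovering the first form.

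For the explicit bound on $|R_k(r)|$ I would use the Mittag-Leffler expansion
\[
f(x) = 1 - \frac{x}{2} + \sum_{m=1}^\infty \frac{2x^2}{x^2 + (2m\pi)^2},
\]
differentiate it $2k+1$ times termwise, and evaluate the resulting Fourier-cosine integrals against $\cos(rx)$. Combining this with $B_{2k} = 2\,(2k)!\,\zeta(2k)/(2\pi)^{2k}$ and a Beta-integral identity should produce the constant $\tfrac{\pi}{2}\binom{k+1/2}{k}$.

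I expect the last step to be the main obstacle. The integration by parts leading to the asymptotic series is routine once the integral representation is in hand, but pinning down the precise prefactor $\tfrac{\pi}{2}\binom{k+1/2}{k}$ is delicate: a crude $|\cos(rx)|\leq 1$ estimate is not sharp enough and destroys the constant, so one has to exploit the oscillation of $\cos(rx)$ through a suitable change of variable that isolates a Beta integral; this is, I expect, the route Russell took.
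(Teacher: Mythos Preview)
The paper does not prove this lemma at all: it is stated as ``the following theorem of Russell'' with a citation to \cite{russel} and then used as a black box throughout Section~3. There is therefore no in-paper argument to compare your proposal against.

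For what it is worth, your derivation of the integral representation and of the two forms of $R_k$ via the recursion $J_j=f^{(2j)}(0)/r-J_{j+1}/r^2$ is correct and is exactly Russell's route. Your concern about the remainder bound, however, is misplaced. The crude estimate $|\cos(rx)|\le 1$ \emph{is} sharp enough, provided you pair it with the pointwise bound
\[
\bigl|f^{(2k+1)}(x)\bigr|\le \frac{(2k+1)!}{(2\pi)^{2k+1}}\sum_{m\ge 1}\frac{2m}{\bigl(m^{2}+(x/2\pi)^{2}\bigr)^{k+1}},
\]
which follows from the Mittag--Leffler expansion you wrote down (and which the present paper in fact quotes from Russell later, in the proof of Lemma~\ref{le1}). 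Integrating this over $x\in(0,\infty)$ gives, after the substitution $u=x/(2\pi m)$,
\[
\int_0^\infty |f^{(2k+1)}(x)|\,dx\le \frac{(2k+1)!}{(2\pi)^{2k}}\,\zeta(2k)\int_0^\infty\frac{2\,du}{(1+u^2)^{k+1}}
=\frac{(2k+1)!}{(2\pi)^{2k}}\,\zeta(2k)\cdot B\!\left(\tfrac12,k+\tfrac12\right),
\]
and combining the Beta value with $B_{2k}=2(2k)!\,\zeta(2k)/(2\pi)^{2k}$ yields precisely $\tfrac{\pi}{2}\binom{k+1/2}{k}B_{2k}$. So the Beta integral enters through the $x$-integration of the partial-fraction bound, not through any exploitation of the oscillation of $\cos(rx)$; no ``suitable change of variable that isolates a Beta integral'' beyond the obvious rescaling is needed.
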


\section{Proofs of the lemmas}\label{sec2}

\begin{proof}[Proof of Lemma~\ref{le1}.]
From the definition of $\alpha \left(r\right)$ we have
\begin{equation}\label{eq1}
\alpha \left( r \right) = \frac{{\left( {12r^4  + 2r^2  - 1} \right)S\left( r \right) - 12r^2 }}{{12 - \left( {12r^2  + 6} \right)S\left( r \right)}}.
\end{equation}
According to Hoorfar and Qi we find that
\[
\frac{1}{4} \le \alpha \left( r \right) \le \frac{3}{2},
\]
if $r>0$. We have to improve this for $r>2.57$. Note that if
\begin{equation}\label{eq2}
A\left( r \right) < S\left( r \right) < B\left( r \right)
\end{equation}
and
\begin{equation}\label{eq3}
\frac{1}{{r^2  + 1/2}} < A\left( r \right),\;B\left( r \right),
\end{equation}
then
\begin{equation}\label{eq4}
\frac{{\left( {12r^4  + 2r^2  - 1} \right)B\left( r \right) - 12r^2 }}{{12 - \left( {12r^2  + 6} \right)B\left( r \right)}} < \alpha \left( r \right) < \frac{{\left( {12r^4  + 2r^2  - 1} \right)A\left( r \right) - 12r^2 }}{{12 - \left( {12r^2  + 6} \right)A\left( r \right)}}.
\end{equation}
From Lemma~\ref{th2} we have
\[
S\left( r \right) > \frac{1}{{r^2 }} - \frac{1}{{6r^4 }} - \frac{1}{{30r^6 }} - \frac{1}{{42r^8 }} - \left| {\frac{\pi }{2}\binom{7/2 }{ 3}\frac{1}{{42}}} \right|\frac{1}{{r^8 }} > \frac{1}{{r^2 }} - \frac{1}{{6r^4 }} - \frac{1}{{30r^6 }} - \frac{{53}}{{500r^8 }} > \frac{1}{{r^2  + 1/2}},
\]
if $r>1$. Let
\[
G \left(r\right)= \frac{1}{{r^2 }} - \frac{1}{{6r^4 }} - \frac{1}{{30r^6 }} - \frac{{53}}{{500r^8 }},
\]
then
\begin{equation}\label{eq5}
\alpha \left( r \right) < \frac{{\left( {12r^4  + 2r^2  - 1} \right)G\left( r \right) - 12r^2 }}{{12 - \left( {12r^2  + 6} \right)G\left( r \right)}} = \frac{{2600r^6  + 1758r^4  + 268r^2  - 159}}{{6000r^6  - 2100r^4  - 2208r^2  - 954}} < \frac{{\zeta \left( 3 \right)}}{{6\zeta \left( 3 \right) - 6}},
\end{equation}
if $r>1.3$. Applying Russell's theorem again, we obtain
\[
\frac{1}{{r^2  + 1/2}} < S\left( r \right) < \frac{1}{{r^2 }} - \frac{1}{{6r^4 }} - \frac{1}{{30r^6 }} - \frac{1}{{42r^8 }} + \left| {\frac{\pi }{2}\binom{7/2 }{ 3}\frac{1}{{42}}} \right|\frac{1}{{r^8 }} < \frac{1}{{r^2 }} - \frac{1}{{6r^4 }} - \frac{1}{{30r^6 }} + \frac{{3}}{{50r^8 }},
\]
if $r>0$. Let
\[
K\left( r \right)= \frac{1}{{r^2 }} - \frac{1}{{6r^4 }} - \frac{1}{{30r^6 }} + \frac{3}{{50r^8 }},
\]
then
\[
\alpha \left( r \right) > \frac{{\left( {12r^4  + 2r^2  - 1} \right)K\left( r \right) - 12r^2 }}{{12 - \left( {12r^2  + 6} \right)K\left( r \right)}} = \frac{{260r^6  - 123r^4  - 23r^2  + 9}}{{600r^6  - 210r^4  + 78r^2  + 54}}.
\]
From this and from (\ref{eq5}) we find
\[
\mathop {\lim }\limits_{r \to +\infty } \frac{{260r^6  - 123r^4  - 23r^2  + 9}}{{600r^6  - 210r^4  + 78r^2  + 54}} \leq \mathop {\lim }\limits_{r \to +\infty } \alpha \left( r \right) \leq \mathop {\lim }\limits_{r \to +\infty } \frac{{2600r^6  + 1758r^4  + 268r^2  - 159}}{{6000r^6  - 2100r^4  - 2208r^2  - 954}}.
\]
Since
\[
\mathop {\lim }\limits_{r \to +\infty } \frac{{260r^6  - 123r^4  - 23r^2  + 9}}{{600r^6  - 210r^4  + 78r^2  + 54}} = \mathop {\lim }\limits_{r \to +\infty } \frac{{2600r^6  + 1758r^4  + 268r^2  - 159}}{{6000r^6  - 2100r^4  - 2208r^2  - 954}} = \frac{{13}}{{30}}
\]
we obtain
\[
\mathop {\lim }\limits_{r \to +\infty } \alpha \left( r \right) = \frac{{13}}{{30}}.
\]

Now we consider the monotonicity of $\alpha$. We will show that the value of the derivative $\alpha'\left(r\right)$ is negative for any real number $r > 2.57$. A straightforward calculation gives
\begin{multline}
 - \frac{3}{{2r}}\left( {\left( {r^2  + \frac{1}{2}} \right)S\left( r \right) - 1} \right)^2 \alpha '\left( r \right) = \left( {3r^4 + 3r^2  + \frac{1}{2}} \right)S^2 \left( r \right) - \left( {6r^2  + 2} \right)S\left( r \right)\\
 - \left( {2r^2  + \frac{1}{2}} \right)\sum\limits_{n \ge 1} {\frac{{2n}}{{\left( {r^2  + n^2 } \right)^3 }}}  + 3.
\end{multline}
Thus it is sufficient to show that for $r>2.57$,
\begin{equation}\label{eq6}
T\left(r\right) = \left( {3r^4  + 3r^2  + \frac{1}{2}} \right)S^2 \left( r \right) - \left( {6r^2  + 2} \right)S\left( r \right) - \left( {2r^2  + \frac{1}{2}} \right)\sum\limits_{n \ge 1} {\frac{{2n}}{{\left( {r^2  + n^2 } \right)^3 }}}  + 3 > 0.
\end{equation}
From Lemma~\ref{th2} we have
\[
S\left( r \right) = \frac{1}{{r^2 }} - \frac{1}{{6r^4 }} - \frac{1}{{30r^6 }} - \frac{1}{{42r^8 }} - \frac{1}{{30r^{10} }} + R_4 \left( r \right),
\]
\begin{equation*}
\begin{split}
S^2 \left( r \right) = \frac{{2R_4 \left( r \right)}}{{r^2 }} - \frac{{R_4 \left( r \right) - 3}}{{3r^4 }} & - \frac{{R_4 \left( r \right) + 5}}{{15r^6 }} - \frac{{60R_4 \left( r \right) + 49}}{{1260r^8 }}\\
 & - \frac{{42R_4 \left( r \right) + 23}}{{630r^{10} }} + M\left( r \right),
\end{split}
\end{equation*}
\[
\sum\limits_{n \ge 1} {\frac{{2n}}{{\left( {r^2  + n^2 } \right)^3 }}}  =  - \frac{1}{{4r}}S'\left( r \right) = \frac{1}{{2r^4 }} - \frac{1}{{6r^6 }} - \frac{1}{{20r^8 }} - \frac{1}{{21r^{10} }} - \frac{1}{{12r^{12} }} - \frac{1}{{4r}}R_4 ^\prime  \left( r \right),
\]
where
\begin{equation} \label{eq7}
M\left( r \right) =  - \frac{{121}}{{2100r^{12} }} + \frac{4}{{315r^{14} }} + \frac{{41}}{{14700r^{16} }} + \frac{1}{{630r^{18} }} + \frac{1}{{900r^{20} }} + R_4^2 \left( r \right).
\end{equation}
By substituting these into (\ref{eq6}) we obtain
\begin{equation*}
\begin{split}
T\left(r\right) = 3r^4 M\left( r \right) & + 3r^2 M\left( r \right) + \frac{{R_4 ^\prime  \left( r \right)r}}{2} + \frac{{R_4 ^\prime  \left( r \right)}}{{8r}} - \frac{{R_4 \left( r \right)}}{{5r^2 }} - \frac{{107R_4 \left( r \right)}}{{210r^4 }} - \frac{{79R_4 \left( r \right)}}{{210r^6 }}\\
 & - \frac{{282R_4 \left( r \right) - 301}}{{1260r^8 }} - \frac{{6R_4 \left( r \right) - 43}}{{180r^{10} }} + \frac{1}{{24r^{12} }} + \frac{{6R_4 \left( r \right) + M\left( r \right)}}{2}.
\end{split}
\end{equation*}
From Lemma~\ref{th2} we have
\begin{equation*}
\begin{split}
R_4 \left( r \right) =  - \frac{5}{{66r^{12} }} - \frac{{691}}{{2730r^{14} }} + R_6 \left( r \right) & <  - \frac{5}{{66r^{12} }} - \frac{{691}}{{2730r^{14} }} + \frac{1}{{r^{14} }}\frac{\pi }{2}\binom{13/2 }{ 6}B_{12} \\
 & < - \frac{5}{{66r^{12} }} + \frac{{1}}{{r^{14} }}.
\end{split}
\end{equation*}
Thus if $r \geq 1$,
\begin{equation*}
\begin{split}
 &  - \frac{{R_4 \left( r \right)}}{{5r^2 }} - \frac{{107R_4 \left( r \right)}}{{210r^4 }} - \frac{{79R_4 \left( r \right)}}{{210r^6 }} - \frac{{282R_4 \left( r \right) - 301}}{{1260r^8 }} - \frac{{6R_4 \left( r \right) - 43}}{{180r^{10} }}\\
 &  > \frac{{43}}{{180r^8 }} + \frac{{43}}{{180r^{10} }} + \frac{1}{{66r^{14} }} - \frac{{2237}}{{13860r^{16} }} - \frac{{6667}}{{13860r^{18} }} - \frac{{4979}}{{13860r^{20} }} - \frac{{3067}}{{13860r^{22} }} - \frac{1}{{30r^{24} }}\\
 &   \ge \frac{{43}}{{180r^8 }} + \frac{{43}}{{180r^{10} }} + \frac{1}{{66r^{14} }} - \frac{{1451}}{{1155r^{16} }}.
\end{split}
\end{equation*}
This means that
\begin{equation*}
\begin{split}
T\left(r\right) > 3r^4 M\left( r \right) + 3r^2 M\left( r \right) & + \frac{{R'_4 \left( r \right)r}}{2} + \frac{{R'_4 \left( r \right)}}{{8r}} + \frac{{43}}{{180r^8 }} + \frac{{43}}{{180r^{10} }}\\
& + \frac{1}{{24r^{12} }} + \frac{1}{{66r^{14} }} - \frac{{1451}}{{1155r^{16} }} + \frac{{6R_4 \left( r \right) + M\left( r \right)}}{2}.
\end{split}
\end{equation*}
From (\ref{eq7}) it is clear that
\[
M\left( r \right) >  - \frac{{121}}{{2100r^{12} }} + \frac{4}{{315r^{14} }} + \frac{{41}}{{14700r^{16} }}.
\]
We also have
\begin{equation*}
\begin{split}
R_4 \left( r \right) =  - \frac{5}{{66r^{12} }} - \frac{{691}}{{2730r^{14} }} + R_6 \left( r \right) & >  - \frac{5}{{66r^{12} }} - \frac{{691}}{{2730r^{14} }} - \frac{1}{{r^{14} }}\frac{\pi }{2}\binom{13/2 }{ 6}B_{12} \\
 & >  - \frac{5}{{66r^{12} }} - \frac{{3967}}{{2730r^{14} }},
\end{split}
\end{equation*}
and thus 
\begin{equation}\label{eq8}
T\left(r\right) > \frac{{R'_4 \left( r \right)r}}{2} + \frac{{R'_4 \left( r \right)}}{{8r}} + \frac{{104}}{{1575r^8 }} + \frac{{164}}{{1575r^{10} }} - \frac{{13579}}{{80850r^{12} }} - \frac{{27302953}}{{6306300r^{14} }} - \frac{{405829}}{{323400r^{16} }}.
\end{equation}
Now we are going to estimate ${R_4 ^\prime  \left( r \right)}$. We have
\[
R'_4 \left( r \right) = \left( - \frac{5}{{66r^{12} }} - \frac{{691}}{{2730r^{14} }} + R_6 \left( r \right)\right)' = \frac{{10}}{{11r^{13} }} + \frac{{691}}{{195r^{15} }} + R'_6 \left( r \right),
\]
where
\begin{equation*}
\begin{split}
R'_6 \left( r \right) & = \left( {\frac{1}{{r^{14} }}\int_0^\infty  {f^{\left( {13} \right)} \left( x \right)\cos \left( {rx} \right)dx} } \right)^\prime \\ 
 & =  - \frac{{14}}{{r^{15} }}\int_0^\infty  {f^{\left( {13} \right)} \left( x \right)\cos \left( {rx} \right)dx}  - \frac{1}{{r^{14} }}\int_0^\infty  {xf^{\left( {13} \right)} \left( x \right)\sin \left( {rx} \right)dx} \\
 & =  - \frac{{14}}{r}R_6 \left( r \right) - \frac{1}{{r^{14} }}\int_0^\infty  {xf^{\left( {13} \right)} \left( x \right)\sin \left( {rx} \right)dx} .
\end{split}
\end{equation*}
Russell showed that
\begin{equation*}
\left| {f^{\left( k \right)} \left( x \right)} \right| \le \frac{{k!2\cdot\zeta \left( k \right)}}{{\left( {2\pi } \right)^k }},
\end{equation*}
\begin{equation*}
\left| {f^{\left( k \right)} \left( {2\pi x} \right)} \right| \le \frac{{k!}}{{\left( {2\pi } \right)^k }}\sum\limits_{n \ge 1} {\frac{{2n}}{{\left( {n^2  + x^2 } \right)^{\left( {k + 1} \right)/2} }}} ,
\end{equation*}
if $k \geq 2$. Diananda \cite{diananda} proved the inequality
\begin{equation*}
\sum\limits_{n \ge 1} {\frac{{2n}}{{\left( {n^2  + x^2 } \right)^\mu  }}}  < \frac{1}{{\left( {\mu  - 1} \right)x^{2\mu  - 2} }},\;\;\mu  > 1,\;\;x > 0.
\end{equation*}
Thus
\[
\left| {f^{\left( k \right)} \left( {2\pi x} \right)} \right| < \frac{{k!}}{{\left( {2\pi } \right)^k }}\frac{2}{{\left( {k - 1} \right)x^{k - 1} }},\;\;k > 1,\;\;x > 0.
\]
From these we obtain
\begin{equation*}
\begin{split}
\left| {R'_6 \left( r \right)} \right| & \le \frac{{14}}{r}\left| {R_6 \left( r \right)} \right| + \frac{1}{{r^{14} }}\int_0^{2\pi} {x\left| {f^{\left( {13} \right)} \left( x \right)} \right|\left| {\sin \left( {rx} \right)} \right|dx} \\
  & + \frac{4\pi^2}{{r^{14} }}\int_1^\infty  {x\left| {f^{\left( {13} \right)} \left( 2\pi x \right)} \right|\left| {\sin \left( {2 \pi rx} \right)} \right|dx}\\
  & < \frac{{14}}{r}\frac{1}{{r^{14} }}\frac{\pi }{2}\binom{13/2 }{ 6}B_{12}  + \frac{1}{{r^{14} }}\int_0^{2\pi} {x\frac{{13! \cdot 2 \cdot \zeta \left( {13} \right)}}{{\left( {2\pi } \right)^{13} }}dx}  + \frac{4\pi^2}{{r^{14} }}\int_1^\infty  {x\frac{{13!}}{{\left( {2\pi } \right)^{13} }}} \frac{2}{{12x^{12} }}dx \\
  & <  \frac{{{\rm 1051}}}{{100r^{14} }} + \frac{{49}}{{3r^{15} }}.
\end{split}
\end{equation*}
Hence we finally have
\[
R'_4 \left( r \right) > \frac{{10}}{{11r^{13} }} + \frac{{691}}{{195r^{15} }} - \frac{{1051}}{{100r^{14} }} - \frac{{49}}{{3r^{15} }} = \frac{{10}}{{11r^{13} }} - \frac{{1051}}{{100r^{14} }} - \frac{{2494}}{{195r^{15} }}.
\]
Plugging this into the right-hand side of (\ref{eq8}), we find that
\[
T\left( r \right) > \frac{{104}}{{1575r^8 }} + \frac{{164}}{{1575r^{10} }} + \frac{{23171}}{{80850r^{12} }} - \frac{{1051}}{{200r^{13} }} - \frac{{16728577}}{{1576575r^{14} }} - \frac{{1051}}{{800r^{15} }} - \frac{{11997107}}{{4204200r^{16} }}.
\]
Let
\[
P\left( r \right)= \frac{{104}}{{1575}}r^8  + \frac{{164}}{{1575}}r^6  + \frac{{23171}}{{80850}}r^4  - \frac{{1051}}{{200}}r^3  - \frac{{16728577}}{{1576575}}r^2  - \frac{{1051}}{{800}}r - \frac{{11997107}}{{4204200}},
\]
then
\begin{equation*}
\begin{split}
P\left( {r + 2.57} \right) =& \frac{{104}}{{1575}}r^8  + \frac{{53456}}{{39375}}r^7  + \frac{{12123418}}{{984375}}r^6  + \frac{{1584201713}}{{24609375}}r^5\\
& + \frac{{32175150922307}}{{151593750000}}r^4  + \frac{{8482041695506649}}{{18949218750000}}r^3  + \frac{{62685536289049749}}{{111718750000000}}r^2\\
& + \frac{{263340412300075879103}}{{821132812500000000}}r + \frac{{5236655690345652768413}}{{1970718750000000000000}}.
\end{split}
\end{equation*}
Thus $T\left( r \right) >0$ if $r>2.57$. Hence we showed that for $r>2.57$, $\alpha \left(r\right)$ is a strictly decreasing function of $r$, satisfies the inequality $\alpha \left( r \right) < \frac{{\zeta \left( 3 \right)}}{{6\zeta \left( 3 \right) - 6}}$ and $\lim _{r \to +\infty } \alpha \left( r \right) = \frac{{13}}{{30}}$. Numerical evaluation shows that $\alpha \left( {2.57} \right) = 0.4709258826 \ldots  > \frac{{13}}{{30}} = 0.4333333333 \ldots$. From these it follows that $\frac{{13}}{{30}} < \alpha \left( r \right)$ and we conclude the statement of the lemma.
\end{proof}

\begin{proof}[Proof of Lemma~\ref{le2}.]
To prove the lemma we show that $T\left(r\right)$ defined by (\ref{eq6}) is positive if $0 < r \leq 0.3$. It is well known (see for example \cite{cerone}) that
\[
\sum\limits_{n \ge 1} {\frac{{2n}}{{\left( {n^2  + r^2 } \right)^\mu  }}}  = \sum\limits_{n \ge 0} {\left( { - 1} \right)^n 2\binom{\mu  + n - 1 }{ n} \zeta \left( {2\mu  + 2n - 1} \right)r^{2n} } ,\;\;\mu  > 0,\;\;0< r < 1.
\]
Here $\zeta$ denotes Riemann's Zeta function. And thus
\begin{equation*}
\begin{split}
S\left( r \right) = & \sum\limits_{n \ge 1} {\frac{{2n}}{{\left( {n^2  + r^2 } \right)^2 }}}  = \sum\limits_{n \ge 0} {\left( { - 1} \right)^n \left( {2n + 2} \right)\zeta \left( {2n + 3} \right)r^{2n} } ,\\
& \sum\limits_{n \ge 1} {\frac{{2n}}{{\left( {n^2  + r^2 } \right)^3 }}}  = \sum\limits_{n \ge 0} {\left( { - 1} \right)^n \left( {n + 1} \right)\left( {n + 2} \right)\zeta \left( {2n + 5} \right)r^{2n} } .
\end{split}
\end{equation*}
These series are alternating and for fixed $0<r\leq0.3$, the sequences
\begin{align*}
s_n & = \left( {2n + 2} \right)\zeta \left( {2n + 3} \right)r^{2n} ,\\
t_n & = \left( {n + 1} \right)\left( {n + 2} \right)\zeta \left( {2n + 5} \right)r^{2n} 
\end{align*}
are monotonically decreasing. By Leibniz's theorem we find
\begin{align*}
\sum\limits_{n \ge 1} {\frac{{2n}}{{\left( {n^2  + r^2 } \right)^2 }}}  & = 2\zeta \left( 3 \right) - 4\zeta \left( 5 \right)r^2  + 6\zeta \left( 7 \right)r^4  - \theta _r  \cdot 8\zeta \left( 9 \right)r^6 ,\\
\sum\limits_{n \ge 1} {\frac{{2n}}{{\left( {n^2  + r^2 } \right)^3 }}}  & = 2\zeta \left( 5 \right) - 6\zeta \left( 7 \right)r^2  + 12\zeta \left( 9 \right)r^4  - \rho _r  \cdot 20\zeta \left( 11 \right)r^6 ,
\end{align*}
where $0 < \theta _r ,\rho _r  < 1$ and $0<r\leq0.3$. By plugging these into (\ref{eq6}), we obtain
\begin{equation*}
\begin{split}
T\left( r \right) & > \left( {3r^4  + 3r^2  + \frac{1}{2}} \right)\left( {2\zeta \left( 3 \right) - 4\zeta \left( 5 \right)r^2  + 6\zeta \left( 7 \right)r^4  - 8\zeta \left( 9 \right)r^6 } \right)^2 \\
 & - \left( {6r^2  + 2} \right)\left( {2\zeta \left( 3 \right) - 4\zeta \left( 5 \right)r^2  + 6\zeta \left( 7 \right)r^4 } \right)\\
 & - \left( {2r^2  + \frac{1}{2}} \right)\left( {2\zeta \left( 5 \right) - 6\zeta \left( 7 \right)r^2  + 12\zeta \left( 9 \right)r^4 } \right) + 3 = Q\left(r\right).
\end{split}
\end{equation*}
The right-hand side is a polynomial in degree $16$. It can be shown that the polynomial\linebreak $\left( {r + \frac{{10}}{3}} \right)^{16} Q\left( {\frac{1}{{r + 10/3}}} \right)$ has only positive coefficients, thus $Q\left(r\right)$ is positive on the range $0<r \leq 0.3$, hence $\alpha$ is strictly decreasing there. This means that $\frac{{13}}{{30}} < \alpha \left( {0.3} \right) \le \alpha \left( r \right) < \alpha \left( {0^ +  } \right) = \frac{{\zeta \left( 3 \right)}}{{6\zeta \left( 3 \right) - 6}}$, which completes the proof of the lemma.
\end{proof}

\begin{proof}[Proof of Lemma~\ref{le3}.] As an application of the Euler--Maclaurin summation formula,\linebreak Lampret \cite{lampret} proved that
\[
S\left( r \right) = \sum\limits_{n = 1}^{m - 1} {\frac{{2n}}{{\left( {n^2  + r^2 } \right)^2 }}}  + \frac{1}{{m^2  + r^2 }} + \frac{m}{{\left( {m^2  + r^2 } \right)^2 }} + \frac{{3m^2  - r^2 }}{{6\left( {m^2  + r^2 } \right)^3 }} + \rho \left( {m,r} \right),
\]
where
\[
\left| {\rho \left( {m,r} \right)} \right| \le \frac{{5m^4  + 15m^2 r^2  + 6r^4 }}{{16\left( {m^2  + r^2 } \right)^5 }}.
\]
With $m=5$ we obtain
\[
S\left( r \right) > \sum\limits_{n = 1}^4 {\frac{{2n}}{{\left( {n^2  + r^2 } \right)^2 }}}  + \frac{1}{{25 + r^2 }} + \frac{5}{{\left( {25 + r^2 } \right)^2 }} + \frac{{75 - r^2 }}{{6\left( {25 + r^2 } \right)^3 }} - \frac{{3125 + 375r^2  + 6r^4 }}{{16\left( {25 + r^2 } \right)^5 }} = Z_1\left( r \right),
\]
and
\[
Z_1\left( r \right) - \frac{1}{{r^2  + 1/2}}>0,\;r>0.
\]
According to (\ref{eq2}), (\ref{eq3}) and (\ref{eq4}) we have
\[
\alpha \left( r \right) < \frac{{\left( {12r^4  + 2r^2  - 1} \right)Z_1\left( r \right) - 12r^2 }}{{12 - \left( {12r^2  + 6} \right)Z_1\left( r \right)}},\;r>0.
\]
The right-hand side is a rational function of $r$. Let
\[
\frac{d}{{dr}}\frac{{\left( {12r^4  + 2r^2  - 1} \right)Z_1\left( r \right) - 12r^2 }}{{12 - \left( {12r^2  + 6} \right)Z_1\left( r \right)}} = \frac{{\varphi_1 \left( r \right)}}{{\psi_1 \left( r \right)}},
\]
where $\varphi_1 \left( r \right)$ and $\psi_1 \left( r \right)$ are polynomials in $r$, $\psi_1 \left( r \right)>0$. $\varphi_1 \left( r \right)$ has degree 45 and \linebreak $\left( {r + \frac{1}{{2.57}}} \right)^{45} \varphi_1 \left( {\frac{1}{{r + 1/2.57}}} \right) < 0$ if $r>0$ since its every coefficient is negative. Hence
\[
\frac{{\left( {12r^4  + 2r^2  - 1} \right)Z_1\left( r \right) - 12r^2 }}{{12 - \left( {12r^2  + 6} \right)Z_1\left( r \right)}}
\]
is a strictly decreasing function of $r$ if $0 \leq r \leq 2.57$. Its value at $0.3$ is $0.9596637512\ldots < \frac{{\zeta \left( 3 \right)}}{{6\zeta \left( 3 \right) - 6}}$, thus $\alpha \left( r \right) < \frac{{\zeta \left( 3 \right)}}{{6\zeta \left( 3 \right) - 6}}$ if $0.3 < r \leq 2.57$. If $m=4$, Lampret's formula gives
\[
Z_2 \left( r \right)= \sum\limits_{n = 1}^3 {\frac{{2n}}{{\left( {n^2  + r^2 } \right)^2 }}}  + \frac{1}{{16 + r^2 }} + \frac{4}{{\left( {16 + r^2 } \right)^2 }} + \frac{{48 - r^2 }}{{6\left( {16 + r^2 } \right)^3 }} + \frac{{1280 + 240r^2  + 6r^4 }}{{16\left( {16 + r^2 } \right)^5 }} > S\left( r \right).
\]
Since $S\left( r \right) > \frac{1}{{r^2  + 1/2}}$, according to (\ref{eq2}), (\ref{eq3}) and (\ref{eq4}) we obtain
\[
\alpha \left( r \right) > \frac{{\left( {12r^4  + 2r^2  - 1} \right)Z_2 \left( r \right) - 12r^2 }}{{12 - \left( {12r^2  + 6} \right)Z_2 \left( r \right)}},\; r>0.
\]
Again, the right-hand side is a rational function of $r$, let
\[
\frac{d}{{dr}}\frac{{\left( {12r^4  + 2r^2  - 1} \right)Z_2 \left( r \right) - 12r^2 }}{{12 - \left( {12r^2  + 6} \right)Z_2 \left( r \right)}} = \frac{{\varphi _2 \left( r \right)}}{{\psi _2 \left( r \right)}},
\]
where $\varphi_2 \left( r \right)$ and $\psi_2 \left( r \right)$ are polynomials in $r$, $\psi_2 \left( r \right)>0$. $\varphi_2 \left( r \right)$ has only negative coefficients, hence
\[
\frac{{\left( {12r^4  + 2r^2  - 1} \right)Z_2\left( r \right) - 12r^2 }}{{12 - \left( {12r^2  + 6} \right)Z_2\left( r \right)}}
\]
is a strictly decreasing function of $r$ if $r>0$. Its value at $2.57$ is $0.4360975104\ldots > \frac{13}{30}$, thus $\alpha \left( r \right) > \frac{13}{30}$ if $0.3 < r \leq 2.57$ and this completes the proof.
\end{proof}

\section{Proof of Theorem~\ref{th1}}

From the lemmas we conclude that
\[
\alpha \left( { + \infty  } \right) = \frac{{13}}{{30}} < \alpha \left( r \right) < \frac{{\zeta \left( 3 \right)}}{{6\zeta \left( 3 \right) - 6}} = \alpha \left( 0^ +  \right),
\]
if $r>0$. By the definition of $\alpha \left( r \right)$ this is equivalent to
\[
\cfrac{1}{{r^2  + \cfrac{1}{2} - \cfrac{{4r^2  + 1}}{{12}}\left( {r^2  + \cfrac{{\zeta \left( 3 \right)}}{{6\zeta \left( 3 \right) - 6}}} \right)^{ - 1} }} < S\left( r \right) < \cfrac{1}{{r^2  + \cfrac{1}{2} - \cfrac{{4r^2  + 1}}{{12}}\left( {r^2  + \cfrac{{13}}{{30}}} \right)^{ - 1} }}.
\]
We have
\[
\mathop {\lim }\limits_{r \to 0^ +  } \cfrac{1}{{r^2  + \cfrac{1}{2} - \cfrac{{4r^2  + 1}}{{12}}\left( {r^2  + \cfrac{{\zeta \left( 3 \right)}}{{6\zeta \left( 3 \right) - 6}}} \right)^{ - 1} }} = 2\zeta \left( 3 \right) = S\left( {0^ +  } \right),
\]
thus the lower bound is sharp. From Lemma~\ref{th2}
\[
S\left( r \right) \sim \frac{1}{{r^2 }} - \frac{1}{{6r^4 }} - \frac{1}{{30r^6 }} - \frac{1}{{42r^8 }}- \frac{1}{{30r^{10} }} - \frac{5}{{66r^{12} }}-  \cdots ,
\]
as $r \rightarrow +\infty$. Plugging this into (\ref{eq1}) yields
\[
\alpha \left( r \right) \sim \frac{{13}}{{30}} + \frac{{104}}{{525r^2 }} + \frac{{592}}{{2625r^4 }} + \frac{{404032}}{{1010625r^6 }} +  \cdots ,
\]
as $r \rightarrow +\infty$. Hence we obtain a new asymptotic expansion to Mathieu's series:
\[
S\left( r \right) \sim \cfrac{1}{{r^2  + \cfrac{1}{2} - \cfrac{{4r^2  + 1}}{{12}}\left( {r^2  + \cfrac{{13}}{{30}} + \cfrac{{104}}{{525r^2 }} + \cfrac{{592}}{{2625r^4 }} + \cfrac{{404032}}{{1010625r^6 }} +  \cdots } \right)^{ - 1} }}
\]
holds as $r \rightarrow +\infty$. This shows that the upper bound is sharp too.


\begin{thebibliography}{9}
\setlength{\itemsep}{5pt}

\bibitem{alzer}
H. Alzer,  J. L. Brenner, and O. G. Ruehr,
On Mathieu's inequality,
{\em J. Math. Anal. Appl. \/}~{\bf 218}(2) (1998), 607--610.

\bibitem{cerone}
P. Cerone, C.T. Lenard,
On integral forms of generalised Mathieu series,
{\em J. Inequal. Pure and Appl. Math.\/}~{\bf 4}(5) (2003), 1--11.

\bibitem{diananda}
P. H. Diananda,
Some inequalities related to an inequality of Mathieu,
{\em Math. Ann.\/}~{\bf 250}(2) (1980), 95--98.

\bibitem{guo}
B.-N. Guo,
Note on Mathieu's inequality,
{\em RGMIA Res. Rep. Coll.\/}~{\bf 3}(3) (2000), 389--392.

\bibitem{qi}
A. Hoorfar and F. Qi,
Some New Bounds for Mathieu's Series,
{\em Abstr. Appl. Anal.\/} \textbf{2007} (2007), Article~ID 94854, 10 pages; Available online at \url{http://dx.doi.org/10.1155/2007/94854}.

\bibitem{lampret}
V. Lampret,
An accurate estimate of Mathieu's series,
{\em Int. Math. Forum\/}~{\bf 2}(45-48) (2007), 2269--2276.

\bibitem{makai}
E. Makai,
On the inequality of Mathieu,
{\em Publ. Math. Debrecen\/}~{\bf 5} (1957), 204--205.

\bibitem{qi2}
F. Qi,
Inequalities for Mathieu's series,
{\em RGMIA Res. Rep. Coll.\/}~{\bf 4}(2) (2001), 187–-193; Available online at \url{http://rgmia.org/v4n2.php}.

\bibitem{qi3}
F. Qi,
Integral expression and inequalities of Mathieu type series,
{\em RGMIA Res. Rep. Coll.\/}~{\bf 6}(2) (2003), Article~10; Available online at \url{http://rgmia.org/v6n2.php}.

\bibitem{qi5}
F. Qi,
An integral expression and some inequalities of Mathieu type series,
{\em Rostock. Math. Kolloq.\/}~{\bf 58} (2004), 37--46.

\bibitem{qi4}
F. Qi,  Ch.-P. Chen, and B.-N. Guo,
Notes on double inequalities of Mathieu's series,
{\em Int. J. Math. Math. Sci.\/}~{\bf 2005}(16) (2005), 2547–-2554; Available online at \url{http://dx.doi.org/10.1155/IJMMS.2005.2547}.

\bibitem{russel}
D. C. Russell,
A note on Mathieu's inequality,
{\em Aequationes Math.\/}~{\bf 36}(2-3) (1988), 294--302.

\end{thebibliography}
\end{document}